\newtheorem{theorem}{Theorem}[section]
\newtheorem{corollary}[theorem]{Corollary}
\newtheorem{definition}[theorem]{Definition}
\newtheorem{lemma}[theorem]{Lemma}
\newtheorem{proposition}[theorem]{Proposition}
\newtheorem{remark}[theorem]{Remark}
\newenvironment{proof}[1][Proof]{\textbf{#1.} }{\ \rule{0.5em}{0.5em}}
\begin{document}
\title{A note on generalization of 
Zermelo navigation problem \\ on Riemannian manifolds with strong perturbation}
\author{Piotr Kopacz}
\affil{\small{Jagiellonian University, Faculty of Mathematics and Computer Science\\ 6, Prof. S. Łojasiewicza, 30 - 348, Kraków,
Poland}}
\affil{Gdynia Maritime University, Faculty of Navigation \\3,  Al. Jana Pawła II, 81-345, Gdynia, Poland}
\date{\texttt{}}
\date{\normalsize{\texttt{piotr.kopacz@im.uj.edu.pl}}}
\maketitle
\begin{abstract}
\noindent
We generalize the Zermelo navigation problem and its solution on Riemannian manifolds $(M, h)$  admitting a space dependence of a ship's speed $0<|u(x)|_h\leq1$ 
in the presence of a perturbation $\tilde{W}$ determined by a strong velocity vector field satisfying $|\tilde{W}(x)|_h=|u(x)|_h$, with application of Finsler metric of Kropina type. 
\end{abstract}

\ 

\smallskip
\noindent
\textbf{M.S.C. 2010}: 53B20, 53C21, 53C22, 53C60, 49J15, 49J53.

\smallskip
\noindent \textbf{Keywords:} Zermelo navigation, Kropina metric, Riemann-Finsler manifold, time-minimal path, perturbation.


\section{Introduction}

The objective in the navigation problem of Zermelo is to find the minimum time paths of a ship sailing on  a sea $M$, with the presence of a wind determined by a vector field $W$. The problem was formalized and investigated by E. Zermelo (1931) in the Euclidean spaces  $\mathbb{R}^2$ and $\mathbb{R}^3$, cf. \cite{zermelo2, zermelo}, and generalized considerably (2004) in \cite{colleen_shen} for the case when sea is a Riemannian manifold $(M, h)$ under the assumption that a wind $W$ is a time-independent weak wind, i.e. $h(W, W)<1$. In the absence of a perturbation the solutions to the problem are simply $h$-geodesics of $M$. Note that the original solution given by E. Zermelo admitted a strong wind. The problem may be considered as purely geometric. It has been found out that the trajectory which minimizes travel time are exactly the geodesics of a special Finsler type $F$, that is Randers metric. In other words, the solutions to the problem are the flows of Randers geodesics. The condition on strong convexity, i.e. $|W|_h<1$ ensures then that $F$ is a positive definite Finsler metric. Furthermore, there is an equivalence between Randers metrics and Zermelo's problems \cite{colleen_shen, chern_shen}.   

In \cite{kropina} the authors showed that Zermelo's navigation problem has another solution in Finsler geometry in the case when the wind becomes stronger. This means that there is a wind acting of about the same force as a maximal power of ship's engine. Precisely, it was assumed that $h(W, W)=1$. 
The problem was considered in the original formulation when a ship sails with Riemannian unit speed, i.e. $h(u, u)=1=const.$ 
Obviously, since the ship's speed $|u|_h$ and the wind force $|W|_h$ are equal, unlike the Randers case, the ship cannot proceed anymore against the wind. So following the direction $u=-W$ implies that the resultant velocity $v$ vanishes. Geometrically, in each tangent space $T_xM$ the unit sphere of the new metric $F$ is the $W$-translate of the Riemannian $h$-unit sphere. However, differently from the Randers case, the former passes through the origin of $T_xM$ and thus $F$ cannot be a Finlser metric in the classic sense \cite{kropina}.  

Setting as a reference point Zermelo's formulation of the problem we may ask whether a ship must proceed at a constant maximum speed relative to the surrounding Riemannian sea, i.e. $|u|_h=1$. This assumption we have already dropped  considering the problem on Riemannian manifolds, however being in the case of a background weak wind which guarantees a full control of navigating ship (cf. \cite{kopi6}). Reviewing the bibliography in this scope one may find the paper by A. de Mira Fernandes \cite{mira} who accomodated shortly after Zermelo's contribution a varying magnitude ship's velocity. Having added the extra degree of freedom the author allowed a time and space dependent velocity and solved the corresponding problem with the Euclidean background, namely in $\mathbb{R}^n$. Therefore, he has generalized the results of E. Zermelo \cite{zermelo2, zermelo} and T. Levi-Civita \cite{levi} for the Euclidean spaces to the case where the air speed of a plane is a preassigned function of position and time. Also, the subsequent equations for the flight path of least time  obtained by K. Arrow \cite{arrow}, who considered a passage with $\mathbb{S}^2$-background, implied earlier results achieved by T. Levi-Civita. De Mira Fernandes showed that the change in $|u|_h$ with time has no effect on the formula for the shortest time passage (time-optimal ship's heading) while that with space has the same effect as a corresponding change in wind \cite{arrow}. 
The above contribution was also referred in the modern approach \cite{herdeiro} in a discussion how, when both $W$ and $|u|_h$ are space but not time dependent, it can be recast in a purely geometric form as geodesics of a Randers geometry or as null geodesics in a stationary space-time. Let us note that the Zermelo navigation as a method plays an active and crucial role in modern physics, in particular in quantum mechanics. In this regards, see, for instance, the expositions in \cite{sanchez, russell, russell2, brody, brody2, brody3}. In the investigation on Riemannian manifolds for the case of a strong wind we are going to drop the standard assumption on a constant unit speed. 
We aim to present our glance at the problem with different starting point and therefore contribute to the earlier findings introducing a priori fixed space dependence of a ship's speed $0<|u(x)|_h\leq1$.  
 

\section{Glance at previous findings from a different perspective 
}

Let a pair $ (M,h) $ be a Riemannian manifold where $h = h_{ij}dx^i\otimes dx^j$ is a Riemannian metric and the corresponding norm-squared of tangent vectors $y \in T_x M$ is denoted by $\left|y \right|_h^2 = h_{ij}y^iy^j = h(y, y)$. In contrast to \cite{kropina} we begin with a Riemannian manifold $(M, h)$ and a vector field $\tilde{W}=\tilde{W}^i\frac{\partial}{\partial x}$ on $M$ which need not be of $h$-unit length. We admit that both ship's speed $|u(x)|_h$ and wind $W$ are space-dependent with $0<|u(x)|_h=|\tilde{W(}x)|_h\leq1$. Thus, a ship makes a way unceasingly through the water, but not necessarily over ground. We compute the new Finsler metric $\tilde{F}$ similarly as treated in \cite{kropina}, with a slight refinement of the initial indicatrix-based equation. To reduce the clutter we also adopt the same notations if not otherwise stipulated. We obtain the metric $\tilde{F}$ as the solution to the new equation including the new variable, that is 
\begin{equation}
\left|\frac{y}{\tilde{F}(x, y)}-\tilde{W}\right|=|u(x)|.
\end{equation}
It thus follows from the definition of the inner product 
\begin{equation}
h_{ij}(y^i-\tilde{F}\tilde{W}^i)(y^j-\tilde{F}\tilde{W}^j)=|u|^2\tilde{F}^2. 
\end{equation}
Hence, 
\begin{equation}
(|u|^2-|\tilde{W}|^2)\tilde{F}^2+2h(y, \tilde{W})\tilde{F}-|y|^2=0.
\end{equation}

\noindent
By assumption 
on the equality of the norms we are thus led to 
\begin{equation}
\tilde{F}(x, y)=\frac{|y|_h^2}{2h(y, \tilde{W}(x))}
\label{kropinka}
\end{equation}

\noindent
From the above concerned assumption it is implied that $\tilde{W}\neq0$; $y\neq 0$. We obtained the metric of the analogous form as $F$ in the original case of $h(\tilde{W}, \tilde{W})=1$. We also require that on $M$ there must exist a vector field $\tilde{W}$ without zeros. Therefore, having in mind the Poincar\'{e}-Hopf theorem 
one restricts the structures $(M, h)$ which the theory under consideration can be applied to. In particular, we exlude $\mathbb{S}^2$ since it follows that for any compact regular $2$-dimensional manifold with non-zero Euler characteristic, any continuous tangent vector field has at least one zero. 

\begin{remark}
Under a strong perturbation $|\tilde{W}|_h=1$ formula \eqref{kropinka} as a special case leads to the metric $F$ according to \cite{kropina} in the original formulation of the Zermelo navigation problem on Riemannian manifolds, i.e. with $h(u, u)=1.$ 
\end{remark}
\noindent
Let us observe that 
\begin{equation}
\tilde{F}(x, y)
=\frac{|y|_h^2}{2h(y, |u(x)|_hW(x))}
=\frac{1}{|u(x)|_h}F(x, y)
\label{conformality}
\end{equation}
where  $F(x, y)=\frac{|y|_h^2}{2h(y, W(x))}$ in the previous expression. From \eqref{conformality} it implies that $\tilde{F}$ is the conformal Finsler metric to $F$. This recalls the scenario in the generalized Randers case in the absence of a wind. Then, however, the resulting Randers metric is Riemannian and conformal to the corresponding background Riemannian metric $h$; see Proposition 2.5 in \cite{kopi6}. 
The adequate and wider investigation on conformal and weakly conformal Finsler geometry can be found, in particular, in \cite{rafie, matveev}. To proceed we can simply assume that 
\begin{equation}
\tilde{a}_{ij}(x)=h_{ij}, \qquad \tilde{b}_i(x)=2\tilde{W}_i.   
\label{ab0}
\end{equation}
Hence, 
\begin{equation}
\tilde{b}^2=\tilde{a}^{ij}\tilde{b}_i\tilde{b}_j=4|u(x)|_h^2
\end{equation}
while in the original setting one would then get $b^2=a^{ij}b_ib_j=4=const$. In order to avoid a constant function here as the obtained metrics could be a subject to such a constraint, the authors applied a conformal factor $e^{-k(x)}$ making use of some smooth function $k(x)$ on $M$. 
For comparison, to be in line with \cite{kropina}, if we use an analogous conformal factor $e^{-\tilde{k}(x)}$, where $\tilde{k}(x)$ is also some smooth function on $M$, then we get $\tilde{F}(x, y)=\frac{h(y, y)}{2h_{ij}\tilde{W}^jy^i}=\frac{e^{-\tilde{k}(x)}h_{ij}y^iy^j}{2e^{-\tilde{k}(x)}\tilde{W}_iy^i}$. Therefore, taking 
\begin{equation}
\tilde{a}_{ij}(x)=e^{-\tilde{k}(x)}h_{ij}, \qquad \tilde{b}_i(x)=2e^{-\tilde{k}(x)}\tilde{W}_i,  
\label{ab}
\end{equation}
yields the special Finsler type metric, namely the Kropina metric
\begin{equation}
\tilde{F}(x, y)=\frac{\tilde{a}_{ij}(x)y^iy^j}{\tilde{b}_i(x)y^i}=\frac{\tilde{\alpha}^2(x, y)}{\tilde{\beta}(x, y)}.
\label{kropinka2}
\end{equation}
$\tilde{F}(x, y)$ is composed of the new Riemannian metric $\tilde{\alpha}=\sqrt{\tilde{a}_{ij}(x)y^iy^j}$ and a $1$-form $\tilde{\beta}=\tilde{b}_i(x)y^i$ where $\tilde{b}^2=4|u(x)|_h^2e^{-\tilde{k}(x)}$. Conversely, if we put $h_{ij}=e^{\tilde{k}(x)}\tilde{a}_{ij}$ and $\tilde{W}_i(x)=\frac{e^{\tilde{k}(x)}\tilde{b}_i}{2}$, where 
\begin{equation}
\tilde{k}(x)=2\ln\left(\frac{2|u(x)|_h}{\tilde{b}(x)}\right)
\end{equation} 
then we obtain the initial navigation data in terms $h$ and $\tilde{W}$ which solution to the problem is exactly the Kropina metric \eqref{kropinka2}. To compare, recall $k(x)=\ln\frac{4}{b^2(x)}$ in the original setting. Note that it is sufficient to apply \eqref{ab0} in order to obtain the same form of the Kropina metric given by \eqref{kropinka2}. Therefore, $\tilde{b}^2\neq const.$ with $h(u, u)\neq const.$ Fulfilling the definition of Finsler metric which is positive definite, the function \eqref{kropinka2} is not defined on all $TM$, but only on a domain $\{(x, y)\in TM: \tilde{\beta}>0\}$.  Therefore, we exclude the case when $u=-\tilde{W}$. Following \cite{kropina} we have 
\begin{definition}
Let $(M, h)$ be an $n$-dimensional Riemannian space, $\tilde{W}$ a vector field globally defined on $M$. Let $\tilde{a}_{ij}$ and $\tilde{b}_{i}$ be given by \eqref{ab} 
and denote the Kropina metric by $\tilde{F}$, where $\tilde{F}=\frac{\tilde{\alpha}^2}{\tilde{\beta}}$. Then $\tilde{F}$ will be called $\tilde{U}$-Kropina metric.  
\end{definition}
\label{secondo}
Recall that since the Kropina metrics defined globally on $M$ are considered, the above mentioned topological restrictions to their existence occur. For more details see Propositions 5.2 and  5.13 in \cite{kropina}. One sees immediately that in the case of $h(\tilde{W}, \tilde{W})=1=const.$  $\tilde{U}$-Kropina metric becomes $U$-Kropina metric defined in the original presentation, that is Kropina metric with unit vector field. 
Recalling \eqref{ab0} and \eqref{kropinka2} it results that the generalization preserves the original Riemannian metric $\alpha$ but changes the $1$-form $\beta$. Comparing the resulting Finsler metrics 
we observe that $\tilde{\alpha}=\alpha$ and $\tilde{\beta}\neq\beta$ since $\tilde{W_i}\neq W_i$ for $h(u, u)\neq1$. The difference is made by perturbing wind what, in other words, is connected to the fact of admitting the ship's speed to vary in space. Let us summarize after the slight refinement of the previous investigation which became the point of reference and the motivation for our study. We thus obtain the following 
\begin{proposition}
\label{thm_kropina}
A metric $\tilde{F}$ is of $\tilde{U}$-Kropina type if and only if it solves the generalized Zermelo \linebreak navigation problem on a Riemannian manifold $(M, h)$, with varying in space ship's speed \linebreak $0<|u(x)|_h\leq1$ in the presence of a strong wind $\tilde{W}(x)$ which satisfies $|\tilde{W}|_h=|u|_h$.
\end{proposition}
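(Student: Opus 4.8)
The plan is to establish the biconditional by treating each implication separately, essentially promoting the construction carried out above to a proof. Throughout I would keep the navigation data $(M,h,\tilde{W})$ subject to $0<|u(x)|_h=|\tilde{W}(x)|_h\leq1$ with $\tilde{W}$ nowhere vanishing.

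For the forward direction — that a solution to the generalized Zermelo problem is necessarily of $\tilde{U}$-Kropina type — I would start from the indicatrix constraint $\left|\frac{y}{\tilde{F}}-\tilde{W}\right|_h=|u(x)|_h$ and expand it through the inner product to obtain the quadratic $(|u|^2-|\tilde{W}|^2)\tilde{F}^2+2h(y,\tilde{W})\tilde{F}-|y|^2=0$. The decisive observation is that the hypothesis $|\tilde{W}|_h=|u|_h$ forces the leading coefficient to vanish identically, so the quadratic degenerates to a linear equation whose unique root is \eqref{kropinka}. I would then apply the conformal factor $e^{-\tilde{k}(x)}$, set $\tilde{a}_{ij}$ and $\tilde{b}_i$ as in \eqref{ab}, and read off that $\tilde{F}$ takes the Kropina form \eqref{kropinka2}, with the pair $(\tilde{a}_{ij},\tilde{b}_i)$ satisfying the defining relations of a $\tilde{U}$-Kropina metric.

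For the reverse direction I would invert this correspondence. Given any $\tilde{U}$-Kropina metric $\tilde{F}=\tilde{\alpha}^2/\tilde{\beta}$, I would recover candidate navigation data by setting $h_{ij}=e^{\tilde{k}(x)}\tilde{a}_{ij}$ and $\tilde{W}_i=\tfrac{1}{2}e^{\tilde{k}(x)}\tilde{b}_i$ with $\tilde{k}(x)=2\ln\left(\frac{2|u(x)|_h}{\tilde{b}(x)}\right)$, then verify two facts: that $h$ so defined is a genuine Riemannian metric on $M$, and that $\tilde{W}$ satisfies the norm-equality $|\tilde{W}|_h=|u|_h$. A direct computation of $\tilde{b}^2=\tilde{a}^{ij}\tilde{b}_i\tilde{b}_j$ together with the prescribed $\tilde{k}$ should return $|\tilde{W}|_h=|u|_h$, and substituting the recovered $(h,\tilde{W})$ back into the indicatrix equation should reproduce $\tilde{F}$, confirming that it solves the Zermelo problem with these data.

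The main obstacle I anticipate is not the algebra but the well-posedness peculiar to the Kropina rather than the classical Randers setting. Because the leading coefficient of the quadratic vanishes, $\tilde{F}$ fails to be a positive-definite Finsler metric on all of $TM$ and is defined only on the conic domain $\{\tilde{\beta}>0\}$; I would need to track this restriction carefully in both directions and explicitly exclude the degenerate heading $u=-\tilde{W}$, where the resultant velocity vanishes. Global existence adds a further constraint: the construction requires a nowhere-vanishing $\tilde{W}$, so by the Poincar\'{e}--Hopf theorem the admissible manifolds $(M,h)$ are restricted to those of vanishing Euler characteristic, excluding $\mathbb{S}^2$. Making the equivalence of the norm condition $|\tilde{W}|_h=|u|_h$ with the vanishing of the quadratic term precise, while respecting these domain and topological conditions, is where the argument must be handled with care.
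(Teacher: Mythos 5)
Your proposal is correct and follows essentially the same route as the paper: the forward direction via the indicatrix equation \eqref{kropinka} degenerating to a linear equation under $|\tilde{W}|_h=|u|_h$ and the conformal rescaling \eqref{ab} yielding the Kropina form \eqref{kropinka2}, and the converse by recovering $h_{ij}=e^{\tilde{k}(x)}\tilde{a}_{ij}$, $\tilde{W}_i=\tfrac{1}{2}e^{\tilde{k}(x)}\tilde{b}_i$ from the Kropina data, which is exactly the construction the paper presents in Section 2 as the justification of Proposition \ref{thm_kropina}. Your additional attention to the conic domain $\{\tilde{\beta}>0\}$, the excluded heading $u=-\tilde{W}$, and the Poincar\'{e}--Hopf restriction matches the paper's own caveats.
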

Remark that we exclude here $|W|_h=0$ unlike the generalized Randers case with a spatial function $|u(x)|_h$ in the presence of a weak perturbation $W_R$, i.e. $0\leq h(W_R, W_R)<h(u, u)$, where the solutions to the problem are  then determined by the Riemannian metric conformal to $h$. From \eqref{conformality} it yields that the resulting Kropina geodesics of $F$ and $\tilde{F}$ with $|u|_h=const. $ trace the same curves, however  the speeds differ and therefore the times of travel between given points change. This refers to the particular situation in the generalized Randers case, i.e. with $W_R=0$ and $h(u, u)=const.$ Then, however,  Randers metric is reduced to the corrresponding background Riemannian metric $h$ up to scaling; for more details see the study in \cite{kopi6}. Such a case also corresponds to a pair of conformal homothetic Finsler metrics, that is a special case of weakly conformally equivalent Finsler metrics considered in \cite{rafie}. 
Going further, a glance at the new metric \eqref{kropinka} and \eqref{conformality} leads to 
\begin{lemma}
\label{lemat_kropina}
With arbitrary navigation data 
$(h, |u(x)|_h,\tilde{W}(x))$ 
a transit time of existing, 
 nonzero ($u\neq -\tilde{W}$) solution to the generalized Zermelo navigation problem on Riemannian manifolds in the presence of a strong wind satisfying $|\tilde{W}|_h=|u|_h\neq1$,  
 is greater than a transit time of the corresponding solution to the original Zermelo navigation problem. 
\end{lemma}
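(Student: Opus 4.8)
The plan is to reduce the comparison of transit times to the conformal relation \eqref{conformality} by identifying the transit time along an admissible path with the arc length of the governing Finsler metric. Recall that in the Zermelo framework the metric is built so that its indicatrix consists of the velocities attainable in unit time; hence the time needed to traverse a path $\gamma:[0,1]\to M$ with a feasible velocity profile equals its Finsler length $L_F[\gamma]=\int_0^1 F(\gamma,\dot\gamma)\,dt$, and the transit time of the optimal solution joining two fixed points $p,q$ is the Finsler distance $d_F(p,q)$, attained along the $F$-geodesic. The same identification holds verbatim for the generalized problem with $\tilde F$ and its geodesic $\tilde\gamma^{\ast}$.

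First I would record the pointwise comparison of the two Finsler functions. By \eqref{conformality} we have $\tilde F(x,y)=\tfrac{1}{|u(x)|_h}\,F(x,y)$, and the two metrics share the common domain $\{\tilde\beta>0\}=\{\beta>0\}$, since $\tilde W=|u|_h\,W$ differs from $W$ only by the strictly positive factor $|u|_h$, so $\tilde\beta$ and $\beta$ have the same sign at every $(x,y)$. Because $0<|u(x)|_h\le1$ with $|u|_h\neq1$, the conformal factor obeys $\tfrac{1}{|u(x)|_h}\ge1$, strictly so wherever $|u(x)|_h<1$. Consequently $\tilde F\ge F$ as functions on $TM$, with strict inequality on the region where the ship sails below Riemannian unit speed.

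Next I would integrate this inequality along curves. For every admissible path $\gamma$ from $p$ to $q$ one has $L_{\tilde F}[\gamma]=\int_0^1\frac{1}{|u(\gamma)|_h}F(\gamma,\dot\gamma)\,dt\ge\int_0^1 F(\gamma,\dot\gamma)\,dt=L_F[\gamma]$. Applying this to the $\tilde F$-minimizer $\tilde\gamma^{\ast}$ and then invoking the minimality of the $F$-geodesic yields the chain
\[
d_{\tilde F}(p,q)=L_{\tilde F}[\tilde\gamma^{\ast}]\ \ge\ L_F[\tilde\gamma^{\ast}]\ \ge\ d_F(p,q),
\]
which is precisely the (non-strict) comparison of the two transit times.

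The only point that genuinely needs securing is strictness. Since $|u|_h\le1$ together with $|u|_h\neq1$ forces $|u(x)|_h<1$ — everywhere on $M$ if this is read pointwise, or at least on a set met by the positive-length optimal path — the first inequality in the displayed chain becomes strict, giving $d_{\tilde F}(p,q)>d_F(p,q)$. I expect the main obstacle to lie not in the estimate, which follows at once from the conformal factor being $\ge1$, but in the bookkeeping that guarantees $\tilde\gamma^{\ast}$ actually traverses the slow region so the inequality cannot degenerate to equality; under the pointwise assumption $|u(x)|_h<1$ this is immediate, and the special case $|u|_h=\mathrm{const}<1$ recovers the earlier observation that the two families of Kropina geodesics trace the same curves while the travel time is scaled up by $\tfrac{1}{|u|_h}>1$.
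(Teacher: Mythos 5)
Your proposal is correct and follows essentially the same route as the paper's own proof: identify transit time with Finsler length, use the conformal relation $\tilde F(x,y)=\frac{1}{|u(x)|_h}F(x,y)$ to obtain the pointwise inequality $\tilde F\geq F$, integrate it along the $\tilde F$-geodesic, and invoke minimality of the $F$-geodesic to chain $\mathcal{L}_{\tilde F}(\tilde\gamma)\geq \mathcal{L}_{F}(\tilde\gamma)\geq \mathcal{L}_{F}(\gamma)$. If anything, your handling of strictness (and of the common domain $\{\tilde\beta>0\}=\{\beta>0\}$) is more careful than the paper's, whose argument terminates at the non-strict inequality $\mathcal{L}_{\tilde F}(\tilde\gamma)\geq\mathcal{L}_{F}(\gamma)$ even though the lemma asserts a strictly greater transit time.
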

\begin{proof}
For any piecewise $C^{\infty}$ curve $\ell$ in $M$, the $\tilde{F}$- length of $\ell$ denoted by $\mathcal{L}_{\tilde{F}}(\ell)$ is equal to the time for which the object travels along it, i.e. $T= \int\limits_{0}^{T}\tilde{F}(\dot{\ell}(t))dt = \mathcal{L}_{\tilde{F}}(\ell).$ Let $\gamma$, $\tilde{\gamma}$ be $F$- and $\tilde{F}$-geodesic, respectively, where $\tilde{F}$ is given by \eqref{kropinka}. For any nonzero $ y \in T_x M$ $F(y),\tilde{F}(y)>0$. The function $|u(x)|_h$ is variable in space or constant with $0<h(u, u)\leq1$. Since $u\neq -\tilde{W}$ the resultant speed $v>0$. The equality of the lengths $\mathcal{L}_{\tilde{F}}$ and $\mathcal{L}_{F}$ holds if and only if $|u|_h=1=const.$, then $\tilde{F}(y):=F(y)$. Otherwise, by \eqref{conformality} $\tilde{F}(y) > F(y)$ for any scenario obtained from the triples $(h, \tilde{W}(x), |u(x)|_h)$, thus for any spatial function $|u(x)|_h$ or, equivalently, $|\tilde{W}(x)|_h$, where $|\tilde{W}|_h=|u|_h$. Note that $\mathcal{L}_{\tilde{F}}(\tilde{\gamma})\geq\mathcal{L}_{F}(\tilde{\gamma})$ and $\mathcal{L}_{F}(\tilde{\gamma})\geq\mathcal{L}_{F}(\gamma)$ as the geodesic minimizes the length. From transitivity we are thus led to the inequality $\mathcal{L}_{\tilde{F}}(\tilde{\gamma})\geq\mathcal{L}_{F}(\gamma)$. 
\end{proof}

Remark that the presence of $|u(x)|_h$ in the above expression of navigation data may actually be inessential. If perturbing vector field is a priori fixed then it can be removed, since given $\tilde{W}$ determines $|u|_h$ by $|\tilde{W}|_h$. Nevertheless, we let it to emphasise its new role in the considered approach to the problem inasmuch as we admit $|u(x)|_h$ to be set initially, without being determined by $\tilde{W}$. Next, let us observe that unlike the Randers case, where the entire space $(M, h)$ can be covered with the time-minimal paths, not all the points $x\in M$ are now available for navigating ship any more as the wind is of stronger force. Therefore, one needs to consider the existence of solutions to posed Zermelo problems. 
Also, from 
the above it yields the following, somewhat contrariwise formulated, corollary.  
\begin{corollary}
Let a space-dependent ship's speed $|u(x)|_h$ vary 
  along the existing solution to the generalized Zermelo navigation problem on a Riemannian manifold $(M, h)$ 
 in the presence of a strong perturbation $\tilde{W}\neq -u$ with $|u|_h=|\tilde{W}|_h$. Then the stronger perturbation acts on a ship the shorter travel time is. 

\end{corollary}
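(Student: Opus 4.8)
The plan is to reduce the claim to a pointwise monotonicity of the conformal factor in \eqref{conformality} and then propagate it to transit times by the same length-comparison device used in the proof of Lemma~\ref{lemat_kropina}. First I would fix the background data, namely the Riemannian metric $h$ and the normalized direction field $W=\tilde{W}/|\tilde{W}|_h$ (well defined since $\tilde{W}$ has no zeros), and regard a \emph{stronger} perturbation as a pointwise increase of the magnitude $|\tilde{W}(x)|_h$. Because $|\tilde{W}|_h=|u|_h$, this is the same as an increase of the spatial function $|u(x)|_h\in(0,1]$. The crucial observation is that in \eqref{conformality} the companion metric $F(x,y)=\frac{|y|_h^2}{2h(y,W(x))}$ is built solely from $h$ and the unit direction $W$, so it does not depend on the magnitude $|u|_h$; all the dependence on the perturbation strength is carried by the scalar factor $1/|u(x)|_h$.

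Next I would make the comparison explicit. Suppose two admissible perturbations share the same $h$ and the same direction field $W$ but have magnitudes $|u_1(x)|_h\le|u_2(x)|_h$ on $M$, and let $\tilde{F}_1,\tilde{F}_2$ be the associated $\tilde{U}$-Kropina metrics. By \eqref{conformality} they satisfy $\tilde{F}_i=\frac{1}{|u_i|_h}F$, whence $\tilde{F}_2(x,y)\le\tilde{F}_1(x,y)$ for every $(x,y)$ with $\tilde{\beta}>0$, with equality exactly where $|u_1|_h=|u_2|_h$. Since the transit time of a solution equals its $\tilde{F}$-length, i.e. $T_i=\mathcal{L}_{\tilde{F}_i}(\tilde{\gamma}_i)$ for the corresponding time-minimal geodesic $\tilde{\gamma}_i$ joining the prescribed endpoints, I would chain the inequalities
\[
T_2=\mathcal{L}_{\tilde{F}_2}(\tilde{\gamma}_2)\le\mathcal{L}_{\tilde{F}_2}(\tilde{\gamma}_1)\le\mathcal{L}_{\tilde{F}_1}(\tilde{\gamma}_1)=T_1,
\]
where the first inequality uses that $\tilde{\gamma}_2$ minimizes $\mathcal{L}_{\tilde{F}_2}$ and the second uses the pointwise bound $\tilde{F}_2\le\tilde{F}_1$ evaluated along $\tilde{\gamma}_1$. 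This gives $T_2\le T_1$, i.e. the stronger the perturbation, the shorter the travel time, with equality only in the degenerate constant-speed case $|u_1|_h=|u_2|_h$ already isolated in Lemma~\ref{lemat_kropina}.

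The step requiring the most care is the well-posedness of the comparison rather than the inequality itself. One must keep the direction field $W$ (and $h$) fixed while varying only the magnitude, so that $F$ is genuinely shared by both problems; otherwise $\tilde{F}_2\le\tilde{F}_1$ need not hold. One must also guarantee that a nonzero solution $\tilde{\gamma}_2$ actually exists for the stronger perturbation: as noted after Lemma~\ref{lemat_kropina}, increasing the wind force shrinks the reachable set, so the endpoints must remain joinable for the larger magnitude, and the forbidden configuration $u=-\tilde{W}$ (where $\tilde{\beta}=0$) must be avoided along the competing curve $\tilde{\gamma}_1$ so that $\mathcal{L}_{\tilde{F}_2}(\tilde{\gamma}_1)$ stays finite. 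Granting existence, the remaining estimates are immediate from the minimality of Finsler geodesics and the monotonicity of $1/|u|_h$, so no further computation is needed.
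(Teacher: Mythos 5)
Your proposal is correct and follows essentially the same route as the paper: the corollary is presented there without a separate proof, as an immediate consequence of the conformality relation \eqref{conformality} and the length-comparison device (pointwise monotonicity of the factor $1/|u(x)|_h$ plus geodesic minimality) used in the proof of Lemma \ref{lemat_kropina}. Your only addition is to make explicit the two-magnitude comparison $|u_1|_h\le|u_2|_h \Rightarrow \tilde{F}_2\le\tilde{F}_1 \Rightarrow T_2\le T_1$, together with the admissibility caveats ($\tilde{\beta}>0$ along the competing curve, existence of the minimizer), which the paper leaves implicit; this is a faithful and slightly more careful write-up of the intended argument, not a different one.
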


\noindent
It implies that a ship reaches her destination in the absolutely shortest time when the strongest perturbation blows, what one may find self-contradictory. Indeed, if $h(\tilde{W}, \tilde{W})=1$ the passage will be time-minimal amidst all the 
possible combinations of navigation data. That is the original formulation of the navigation problem brings an optimal solution in comparison to others modified by a new variable $|u(x)|_h$. Obviously, increasing a wind's force causes that a ship's speed is also boosted to hold the constant ratio $\frac{|\tilde{W}|_h}{|u|_h}=1$ as assumed in the problem. 

Now, let us take a look at different straightforward scenario in the presence of the same strong and varying in magnitude wind $\tilde{W}(x)$, with two Riemannian seas $(M, h)$ and $(M, \hat{h})$ which are determined by the conformal background Riemannian metrics $\hat{h}$ and $h$, where $\hat{h}=\frac{1}{|u|_h^2}h$. Thus, $\hat{h}(\tilde{W},\tilde{W})=1=\hat{h}(u, u)$ since, by assumption, $h(\tilde{W}, \tilde{W})=h(u, u) $. Therefore, this gives 
\begin{equation}
\hat{F}(x, y)=
\frac{\hat{h}(y,y)}{2\hat{h}(y,\tilde{W}(x))}=
\frac{|y|_h^2}{2h(y, \tilde{W}(x))}.
\end{equation}

\noindent
Hence, recalling \eqref{kropinka} it yields the equality of the above Kropina metrics, namely  $\hat{F}=\tilde{F}$. We are thus led to 
 
\begin{corollary}
The time-minimal paths of the background conformal Riemannian metrics $\hat{h}$ and $h$, where $\hat{h}=|u(x)|^{-2}_hh$, perturbed by a strong varying 
in space wind $\tilde{W}(x)$ which satisfies $|\tilde{W}|_h=|u|_h$, are represented by the same Kropina geodesics. 
\end{corollary}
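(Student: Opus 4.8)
The plan is to reduce the statement to the equivalence already established in Proposition~\ref{thm_kropina} and then to exploit the scale invariance of the Kropina quotient. First I would recall that, for a Riemannian manifold perturbed by a strong wind of equal norm, the time-minimal paths are precisely the geodesics of the associated $\tilde{U}$-Kropina metric. Thus for the navigation data $(h,\tilde{W})$ with $|\tilde{W}|_h=|u|_h$ the time-minimal paths are the geodesics of $\tilde{F}$ from \eqref{kropinka}, and for the data $(\hat{h},\tilde{W})$ they are the geodesics of the corresponding Kropina metric $\hat{F}$.

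Second I would verify that $(\hat{h},\tilde{W})$ again falls within the scope of Proposition~\ref{thm_kropina}. Since $\hat{h}=|u|_h^{-2}h$, one has $\hat{h}(\tilde{W},\tilde{W})=|u|_h^{-2}h(\tilde{W},\tilde{W})=|u|_h^{-2}|u|_h^2=1$ and likewise $\hat{h}(u,u)=1$; hence on $(M,\hat{h})$ the wind is of unit length and the ship sails at unit speed, so we are back in the original unit-wind Zermelo problem, whose solution is again a Kropina metric. This guarantees that $\hat{F}$ exists and is a genuine $U$-Kropina metric, so that the first step applies to it as well.

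The crux is the identity $\hat{F}=\tilde{F}$ as functions on $TM$. Substituting $\hat{h}=|u|_h^{-2}h$ into the Kropina quotient, the conformal factor $|u|_h^{-2}$ appears in both the numerator $\hat{h}(y,y)$ and the denominator $\hat{h}(y,\tilde{W})$ and cancels, yielding exactly $\tilde{F}$; this is the display immediately preceding the corollary. The point I would stress is that the Kropina ratio $\tilde{\alpha}^2/\tilde{\beta}$ is homogeneous of degree zero under conformal rescaling of the background Riemannian metric, because the quadratic numerator and the linear form in the denominator scale by the same factor, and that $|u|_h^{-2}$ is precisely the factor normalizing the wind to unit length. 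I expect this cancellation, together with the correct reading of the normalization, to be the only genuine content.

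Finally I would conclude that, since $\hat{F}$ and $\tilde{F}$ are one and the same Finsler (Kropina) metric on $TM$, their geodesic equations coincide identically, whence their geodesics agree as parametrized curves. By the first step these geodesics are the time-minimal paths of the two Zermelo problems, so the time-minimal paths of $(h,\tilde{W})$ and $(\hat{h},\tilde{W})$ are represented by the same Kropina geodesics. The main obstacle here is conceptual rather than computational: recognizing that the conformal normalization of the background metric exactly absorbs the spatial variation of $|u|_h$ and leaves the Kropina structure invariant, so that no separate geodesic computation is required once $\hat{F}=\tilde{F}$ is in hand.
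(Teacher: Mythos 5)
Your proposal is correct and follows essentially the same route as the paper: it verifies that $\hat{h}(\tilde{W},\tilde{W})=1=\hat{h}(u,u)$ so that $(\hat{h},\tilde{W})$ is an admissible (in fact original, unit-speed) Zermelo problem, and then observes that the conformal factor $|u(x)|_h^{-2}$ cancels between the numerator and denominator of the Kropina quotient, giving $\hat{F}=\tilde{F}$ and hence identical geodesics. The paper's own argument is exactly this cancellation in the display preceding the corollary, so no further comment is needed.
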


\noindent
In what follows, we present the flow of Kropina geodesics in the generalized approach to the Zermelo navigation problem, with the presence of a strong wind including the influence of a spatial function $|u(x)|_h$. We also compare it to the corresponding solution obtained from the original expression of the problem on the same Riemannian sea $(M, h)$. 


\section{Example}

With the topological restrictions in mind which refer to the existence of globally defined Kropina metrics on $M$, admitting however $\mathbb{S}^{2m-1}$ or $\mathbb{E}^n$, in what follows we present the example with the Euclidean background, namely $\mathbb{E}^2$. Considering dimension two we denote the position coordinates $(x^1, x^2)$ by $(x, y)$ and expand arbitrary tangent vectors $y^1\frac{\partial}{\partial x^1}+y^2\frac{\partial}{\partial x^2}$ at  $(x^1, x^2)$ as $(x,y;u,v)$ or $u\frac{\partial}{\partial x} + v\frac{\partial}{\partial y}$. We also express a ship's speed $|u|_h$ as $|U|$ and the resultant speed $|v|_h$ as $|V|$. Thus, from \eqref{kropinka} we obtain 


\begin{equation}
\tilde{F}(x,y; u, v)=\frac{
h_{11}u^2+2h_{12}uv+h_{22}v^2}{2(h_{11}u\tilde{W}^1+h_{12}u\tilde{W}^2+h_{21}v\tilde{W}^1+h_{22}v\tilde{W}^2)}.
\end{equation}
\noindent
After having set $M:=\mathbb{R}^2$ we get 
\begin{equation}
\tilde{F}(x, y; u,v)=\frac{u^2+v^2}{2(u\tilde{W}^1+v\tilde{W}^2)}
\end{equation}
where simply $|(u, v)|_h=\sqrt{u^2+v^2}$. Without loss of generality let us consider the strong unit wind $W$ represented by 
\begin{equation}
W(x, y)=\cos(x+y)\frac{\partial}{\partial x}+\sin(x+y)\frac{\partial}{\partial y}. 
\label{poleW}
\end{equation}
Hence, $|W(x, y)|_h=\sqrt{(W^1(x, y))^2+(W^2(x, y))^2}=\sqrt{\cos^2(x+y)+\sin^2(x+y)}=1 \  \forall \  (x, y)\in \mathbb{R}^2$. 
\noindent
Consequently, for the applied perturbation \eqref{poleW} the form of the resulting metric in the original expression yields 
\begin{equation}
F(x ,y; u,v)=\frac{u^2+v^2}{2(uW^1+vW^2)}=\frac{u^2+v^2}{2[ u \cos (x+y)+ v \sin (x+y)]}.
\label{kropinka_s2_original}
\end{equation}

\noindent
Let $|u(x)|_h$ be smooth and positive determined by a Gaussian function which is expressed in the general form $f(x)=\bar{a}e^{-\frac{(x-\bar{b})^2}{2\bar{c}^2}}$, where $\bar{a}, \bar{b}, \bar{c}$ are the real constants. For instance, let  
$|U(x,y)|=\frac{2}{3} \exp \left(-\frac{y^2 \sin^2 (x+y)}{\pi }\right)+\frac{1}{3}$. Hence, $ \forall \  (x, y)\in \mathbb{R}^2  \quad |U(x,y)|\in \left(\frac{1}{3}, 1\right]\subset \left(0, 1\right]$. 
The new non-unit wind $\tilde{W}$ blowing on the Euclidean sea yields 
\begin{equation}
\tilde{W}(x, y)=\frac{1}{3}\left(2 e^{-\frac{1}{\pi}y^2 \sin^2 (x+y)}+1\right)\cos(x+y)\frac{\partial}{\partial x}+\frac{1}{3}\left(2e^{ -\frac{1}{\pi}y^2 \sin^2 (x+y)}+1\right)\sin(x+y)\frac{\partial}{\partial y}.
\label{poleWu}
\end{equation}
Let us note that $|W(x, y)|=|U(x, y)|^{-1}|\tilde{W}(x, y)|$. The contour plot and the stream density plot taking the scalar field to be the norm of the perturbation $\tilde{W}$ are presented in Figure \ref{contour_stream}.
\begin{figure}[h!]
        \centering
~\includegraphics[width=0.46\textwidth]{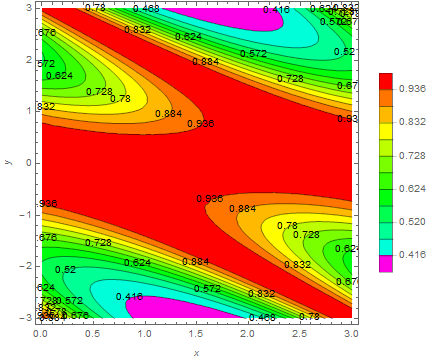} \qquad
~\includegraphics[width=0.44\textwidth]{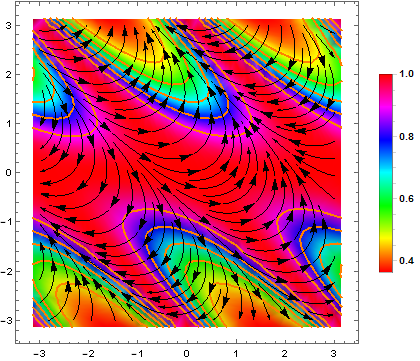} 

        \caption{The contour plot (on the left) and the stream density plot (on the right) of the perturbation $\tilde{W}$ given by \eqref{poleWu}.} 
\label{contour_stream}
\end{figure}
By assumption, $|\tilde{W}(x, y)|_h=\frac{2}{3} \exp \left[-\frac{1}{\pi }y^2 \sin^2 (x+y)\right]+\frac{1}{3}$. For example, with $\varphi_0=0$ a ship commences the voyage starting from the origin with a wind, i.e. $U=\tilde{W}$ and $(\dot{x}, \dot{y})=(2, 0)$ at the maximal resulting speed which equals $|V|:=|U|+|\tilde{W}|=2$ since then $V:=2U$, where $\varphi=\varphi(t)$ is the angle measured counterclockwise which the vector of the relative velocity $U$ forms with $x$-axis. Recall that the function \eqref{kropinka2} is not defined on all $TM$, but only on a domain $\{(x, y; u, v)\in TM: \tilde{\beta}>0\}$. Thus, we have $\varphi_0\in[0, 2\pi) \setminus \{\pi\}$. Clearly, with $\varphi_0=\pi$ , where $\tilde{W}(0, 0)=(1, 0)$, one gets  $(\dot{x}, \dot{y})=(0, 0)$. This means that though a ship proceeds ceaselessly through the water ($|U|>0$), it is stopped over ground, i.e. the resulting speed $|V|=0$. Such a scenario does not occur in the case of Randers metric including the generalized version of the problem in the presence of a weak wind where $|W_R|_h<|u|_h$, cf. \cite{kopi6}.    
For the perturbation \eqref{poleWu} we obtain the form of the resulting metric as follows  
\begin{equation}
\tilde{F}(x ,y; u,v)=\frac{3 \left(u^2+v^2\right) \exp\left(\frac{y^2 \sin ^2(x+y)}{\pi }\right)}{2 \left[\exp\left(\frac{y^2 \sin ^2(x+y)}{\pi }\right)+2\right] \left[u \cos (x+y)+v \sin (x+y)\right]}.
\label{kropinka_s2_generalized}
\end{equation}

\noindent
After having computed the spray coefficients, we obtain the Kropina $F$-geodesic equations for the metric \eqref{kropinka_s2_original} in the original setting. The result is 
\begin{equation}
 \left\{\begin{array}{l l}
		\ddot{x}+\frac{1}{2 \left(\dot{x}^2+\dot{y}^2\right)}\left[\left(4 \dot{x}^3 \dot{y}+4 \dot{x}^2 \dot{y}^2-\dot{x}^4+\dot{y}^4\right) \sin ^2(x+y)+\frac{1}{2}\left(\dot{x}^4+\dot{y}^4\right)\sin 2(x+y)\right.\\
\left.+\dot{x} \dot{y} \left(-3 \dot{x} \dot{y}+2 \dot{x}^2-2 \dot{y}^2\right) \sin 2 (x+y)+2 \dot{y}^2 \left(2 \dot{x} \dot{y}-\dot{x}^2+\dot{y}^2\right) \cos ^2(x+y)\right]=0 

\\

\\

		\ddot{y}- \frac{1}{2 \left(\dot{x}^2+\dot{y}^2\right)}\left[\dot{x} \left(2 \dot{x} \left(2 \dot{x} \dot{y}+\dot{x}^2-\dot{y}^2\right) \sin ^2(x+y)+\dot{y} \left(-3 \dot{x} \dot{y}-2 \dot{x}^2+2 \dot{y}^2\right) \sin 2 (x+y)\right)\right.\\
\left.+\left(4 \dot{x}^2 \dot{y}^2+4 \dot{x} \dot{y}^3+\dot{x}^4-\dot{y}^4\right) \cos ^2(x+y)+\frac{1}{2}\left(\dot{x}^4+\dot{y}^4\right)\sin 2(x+y)\right]=0
\end{array}  \right.
\end{equation}
In the generalization the influence of a spatial function $|U|$ or, equivalently, $|\tilde{W}|$ is noticable in the system of the Kropina $\tilde{F}$-geodesic equations corresponding to    \eqref{kropinka_s2_generalized}. This gives

\begin{equation}
 \left\{\begin{array}{l l}
		\ddot{x}+\frac{1}{2 \pi  \left(\dot{x}^2+\dot{y}^2\right) \left(e^{\frac{y^2 \sin ^2(x+y)}{\pi }}+2\right)} \left\{16 y \dot{x} \dot{y}^3 \sin ^4(x+y)\right.\\
-\pi  \left(-4 \dot{x}^3 \dot{y}-4 \dot{x}^2 \dot{y}^2+\dot{x}^4-\dot{y}^4\right) \left(e^{\frac{y^2 \sin ^2(x+y)}{\pi }}+2\right) \sin ^2(x+y)\\
+\sin 2 (x+y) \left[\pi  \left(\left(2 \dot{x}^3 \dot{y} -3 \dot{x}^2 \dot{y}^2-2 \dot{x} \dot{y}^3\right)\left(e^{\frac{y^2 \sin ^2(x+y)}{\pi }}+2\right) +\dot{x}^4+\dot{y}^4\right)\right.\\
\left.-y \left(-4 (y+1) \dot{x}^3 \dot{y}-6 y \dot{x}^2 \dot{y}^2+4 y \dot{x} \dot{y}^3+y \dot{x}^4+y \dot{y}^4\right) \sin 2 (x+y)\right]\\
+8 y^2 \dot{x}^2 \left(2 \dot{x} \dot{y}+\dot{x}^2-\dot{y}^2\right) \sin (x+y) \cos ^3(x+y)\\
+2 \pi  \dot{y}^2 \left(2 \dot{x} \dot{y}-\dot{x}^2+\dot{y}^2\right) \left(e^{\frac{y^2 \sin ^2(x+y)}{\pi }}+2\right) \cos ^2(x+y)\\
+\frac{1}{2}\sin 2(x+y) \left[4 y \left(2 (2 y+3) \dot{x}^2 \dot{y}^2+4 y \dot{x} \dot{y}^3+(y-1) \dot{x}^4-(y+1) \dot{y}^4\right) \sin ^2(x+y)\right.\\
\left.\left.+\pi  \left(\dot{x}^4+\dot{y}^4\right) e^{\frac{y^2 \sin ^2(x+y)}{\pi }}\right]\right\}=0
 
\\

\\

		\ddot{y}- \frac{1}{2 \pi  \left(\dot{x}^2+\dot{y}^2\right) \left(e^{\frac{y^2 \sin ^2(x+y)}{\pi }}+2\right)}\left\{-8 y \dot{y}^2 \left(\dot{y}^2-\dot{x}^2\right) \sin ^4(x+y)\right.\\
+2 \pi  \dot{x}^2 \left(2 \dot{x} \dot{y}+\dot{x}^2-\dot{y}^2\right) \left(e^{\frac{y^2 \sin ^2(x+y)}{\pi }}+2\right) \sin ^2(x+y)\\
+\sin 2 (x+y) \left[\pi  \left(\left(-2 \dot{x}^3 \dot{y} -3 \dot{x}^2 \dot{y}^2+2 \dot{x} \dot{y}^3\right)\left(e^{\frac{y^2 \sin ^2(x+y)}{\pi }}+2\right) +\dot{x}^4+\dot{y}^4\right)\right.\\
\left.+y \left(4 y \dot{x}^3 \dot{y}-2 (3 y+2) \dot{x}^2 \dot{y}^2-4 y \dot{x} \dot{y}^3+(y+1) \dot{x}^4+(y-1) \dot{y}^4\right) \sin 2 (x+y)\right]\\
+4 y^2 \left(-4 \dot{x}^3 \dot{y}-4 \dot{x}^2 \dot{y}^2+\dot{x}^4-\dot{y}^4\right) \sin (x+y) \cos ^3(x+y)\\
+\pi  \left(4 \dot{x}^2 \dot{y}^2+4 \dot{x} \dot{y}^3+\dot{x}^4-\dot{y}^4\right) \left(e^{\frac{y^2 \sin ^2(x+y)}{\pi }}+2\right) \cos ^2(x+y)+\frac{1}{2}\sin 2(x+y)\\
\left.\left[\pi  \left(\dot{x}^4+\dot{y}^4\right) e^{\frac{y^2 \sin ^2(x+y)}{\pi }}-8 y \dot{y} \left(-y \dot{x}^2 \dot{y}+2 (y+1) \dot{x} \dot{y}^2-2 \dot{x}^3+y \dot{y}^3\right) \sin ^2(x+y)\right]\right\}=0.
\end{array}  \right.
\end{equation}

\noindent
The form of the initial conditions including the optimal control $\varphi(t)$ under perturbing vector field reads $x(0)=x_0\in\mathbb{R}$, $y(0)=y_0\in \mathbb{R}$, and for the first derivative
\begin{equation}
\label{ic1_kropinka}
\dot{x}(0)=\tilde{W}^1(x_0, y_0)+|U(x_0, y_0)|\cos\varphi_0=|U(x_0, y_0)|(\cos(x_0+ y_0)+\cos\varphi_0):=1+\cos\varphi_0, 
\end{equation}
\begin{equation}
\label{ic2_kropinka}
\dot{y}(0)=\tilde{W}^2(x_0, y_0)+|U(x_0, y_0)|\sin\varphi_0 =|U(x_0, y_0)|(\sin(x_0+ y_0)+\sin\varphi_0):=\sin\varphi_0.
\end{equation}

\noindent
The last relations can be derived by direct consideration of the planar  equations of motion including the representation of the vector components of ship's velocity and the new background wind. 
When the families of the time-minimal paths coming from the same fixed point $x\in M$ are considered, $\varphi_0$ plays the role of the parameter which rotates the tangent vector of unperturbed Riemannian geodesic. To provide some numerical computations and to generate the graphs we use Mathematica 10.4 from Wolfram Research. The time-efficient paths in both scenarios, that is Kropina $F$- (black) and $\tilde{F}$-geodesics (red) starting from the origin, with the corresponding strong background winds are presented in Figure \ref{znp_curves}. We set the increments $\Delta \varphi_0 = \frac{\pi}{8}$ and $t=10$. The solutions are also compared accordingly in Figure \ref{znp_curves_razem}. 
\begin{figure}[h]
        \centering
~\includegraphics[width=0.4\textwidth]{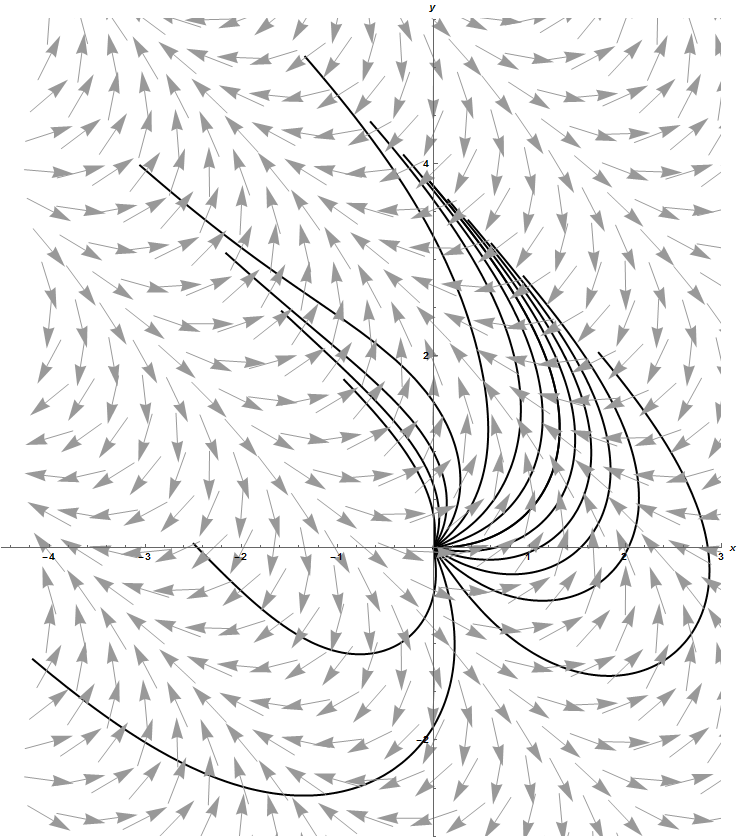} \qquad \qquad
~\includegraphics[width=0.4\textwidth]{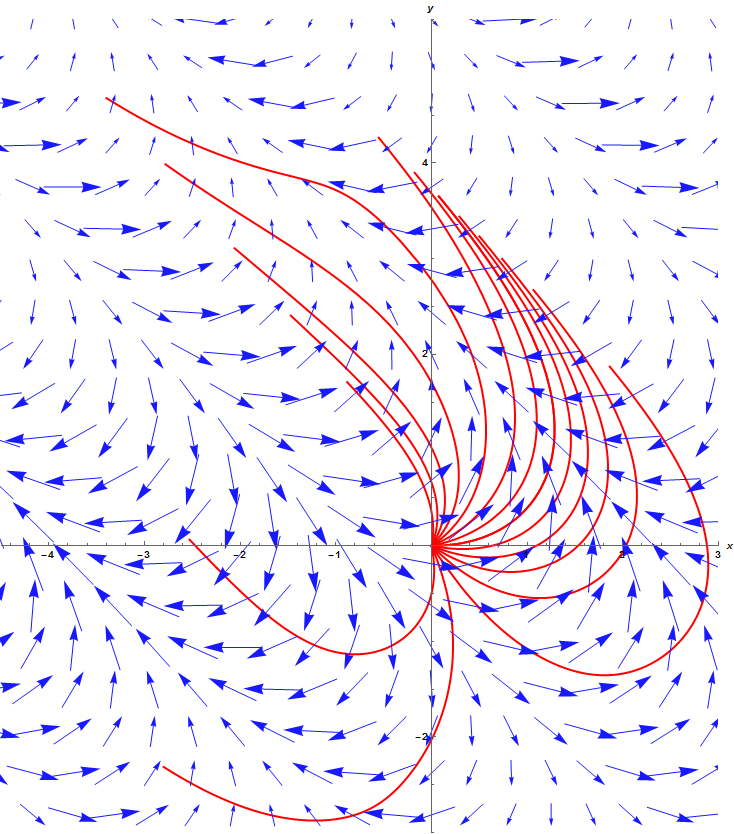} 
        \caption{The Kropina $F$-geodesics (black) with the unit background strong wind $W$ (grey) and $\tilde{F}$-geodesics (red) with the new non-unit background strong wind $\tilde{W}$ (blue), with the increments $\Delta \varphi_0 = \frac{\pi}{8}$; $t=10$.} 
\label{znp_curves}
\end{figure}
\begin{figure}[h!]
        \centering
~\includegraphics[width=0.44\textwidth]{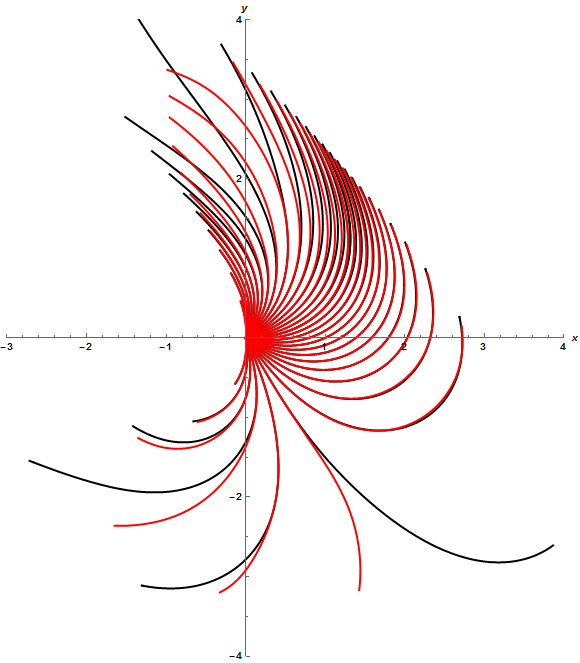}
~\includegraphics[width=0.44\textwidth]{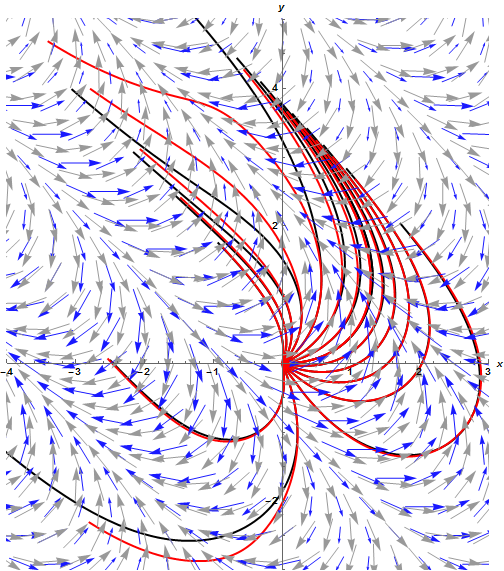} 

        \caption{The Kropina $\tilde{F}$-geodesics (red) starting from the origin compared to the Kropina $F$-geodesics (black) with the increments $\Delta \varphi_0 = \frac{\pi}{18}$, $t=3$ (on the left) and the increments $\Delta \varphi_0 = \frac{\pi}{8}$, $t=10$  (on the right) in the background strong winds $\tilde{W}$ (blue) and $W$ (grey).} 
\label{znp_curves_razem}
\end{figure}
\begin{figure}
        \centering
~\includegraphics[width=0.34\textwidth]{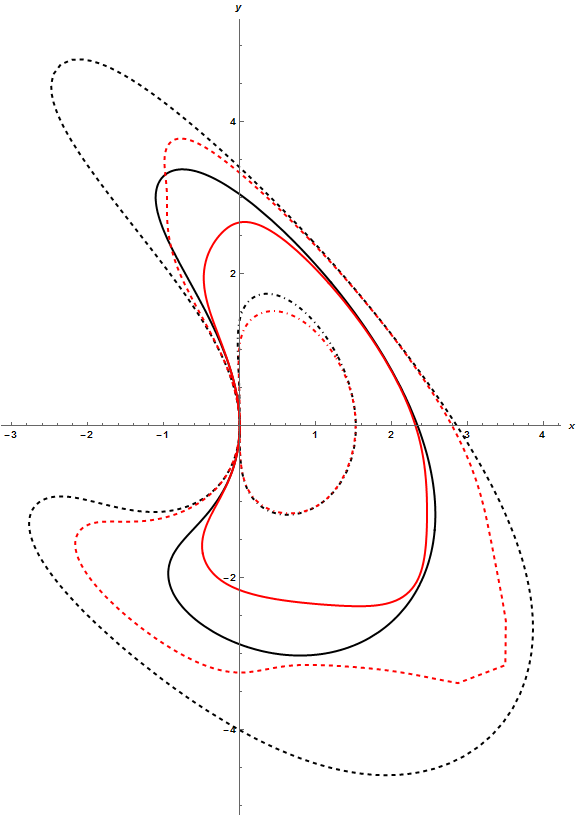} \qquad \qquad
~\includegraphics[width=0.48\textwidth]{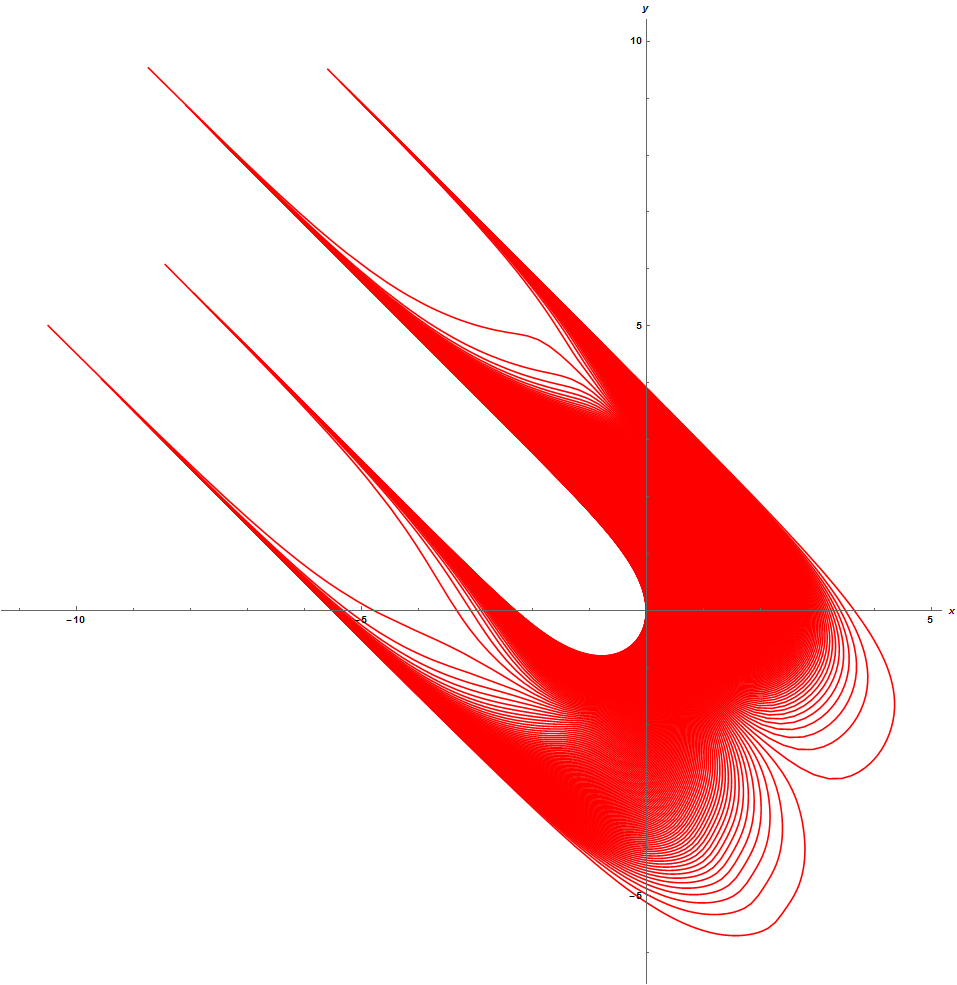}

        \caption{On the left the indicatrices of the Kropina $\tilde{F}$- (red) and $F$-geodesics (black) starting from the origin, with $t=1$ (dot-dashed), $t=2$ (solid), $t=3$ (dashed). On the right the Kropina $\tilde{F}$-geodesics starting from the origin, with the increments $\Delta \varphi_0 = \frac{\pi}{720}$, $t=500$, outlining the area of available points of arrivals.} 
\label{ind}
\end{figure}
The graphical interpretation of Lemma \ref{lemat_kropina} with reference to the example is presented in Figure \ref{ind} where three pairs of $F$- (black) and $\tilde{F}$- indicatrices (red) are compared. It implies that for the corresponding times $t$ the former includes the latter what is the consequence of the influence of applied space-dependent ship's speed. The indicatrix of $\tilde{F}$ is similar to the indicatrix of $F$ with similarity ratio $|u(x)|_h:=|U(x,y)|$. 

Lastly, let us also add that dating back to the formal genesis of the navigation problem in the Hamiltonian formalism, one might investigate the example under consideration with the use of the original navigation formula of E. Zermelo \cite{zermelo, zermelo2, caratheodory} in connection with the results of A. De Mira Fernandes \cite{mira} as we chose the planar Euclidean background. Additionally, the equations of the limit curves which determine the planar area of available points of arrivals as outlined on the right-hand side graph in Figure \ref{ind}, one also might obtain. In this regards, for comparison to the initial research and more details see \S  \ 282 - 287 in \cite{caratheodory}.


\section{Discussion and concluding remarks}

In our study we assumed that the norm $|u(x)|_h$ of a ship's velocity $u$, relative to the surrounding Riemannian sea $(M, h)$ and being a spatial function of $x$, is not necessarily constant, in particular unit, and, what is essential, can be a priori fixed.
In this sense we considered the generalization of the Zermelo navigation with the presence of a strong wind in a purely geometric form. Therefore, we aimed to be in line with the approach to the problem presented in other contributions cited in the introduction and also referred to our previous study for the case of Randers metric, that is the generalization of the navigation problem under a weak wind $|W_R|_h<|u|_h$. Namely, in a starting point we consider the speed $|u(x)|_h$ as a control which complements standard navigation data $(h, \tilde{W})$. Having combined and compared our investigation to the referred meaningful results presented in \cite{kropina} on Kropina metrics making use of the original formulation of the Zermelo navigation with $h(u, u)=1$, we can state that the difference in both approaches refers to the points of view at the problem and the solutions are connected in a simple manner. In what follows we discuss some details and collect the findings. 

In fact, the control $|u(x)|_h$ is strongly limited by the main assumption on the norms' equality, i.e. $|u|_h=|\tilde{W}|_h$ which determines the case. 
One may imagine that in the scenario under consideration there are the "speed zones" referring to the ship's speed through the water or, in other words, the "speed limits" which cover the whole Riemannian sea $(M, h)$. Therefore, captain's duty is to take them into account when preparing the passage plan for the time-efficient voyage by continuous adjusting a ship's engine on the entire route. 
In the second approach the ship's engine telegraph-based plan is executed and the wind force is to be adapted to the fixed ship's passage plan such that it is time-efficient. Though such a scenario is far away from the real marine or air navigation, there are applied optimal control problems when just acting perturbation is fully controllable.    

Proposition \ref{thm_kropina} establishes the direct relation between the Kropina geodesics and the time-minimal paths as the solutions to the navigation problem introducing the space-dependent function $|u(x)|_h$. We see that under the action of a wind $\tilde{W}$ the time-efficient travel path, so the solution to the generalized Zermelo problem, is no longer a background Riemannian $h$-geodesic, but a geodesic of the $\tilde{U}$-Kropina metric $\tilde{F}$. For comparison, let us reflect for a moment on the generalized Randers case (cf. \cite{herdeiro, kopi6}), where one could ask if decreasing a ship's speed $|u|_h$ under fixed weak wind field $W_R$ causes the same effect on the time-minimal path as increasing the wind force with $h(u, u)=1$ and holding the same relation $\frac{|u|_h}{|W_R|_h}$. 
Since $0\leq|W_R|_h<|u|_h<1$, the decrease of the ship's velocity introduces a larger effective wind $\tilde{W}_R^i>W_R^i$. From this point of view the formula for Randers metric in the generalization is then given as in the original setting \cite{colleen_shen}, i.e. $|u|_h=1$, however with $W_R^i$ replaced by a rescaled wind $\tilde{W}_R^i=\frac{1}{|u(x)|_h}W_R^i$. Now, in the presence of stronger perturbation we followed our approach presented in the Randers case what increases the variety of the scenarios and the solutions influenced by the new spatial function $|u(x)|_h$. Remark that, according to Lemma \ref{lemat_kropina}, the corresponding travel times are greater in comparison to the original expression of the problem. Actually, having admitted a priori the space-dependence of $|u(x)|_h$ our presentation  makes a difference in the genesis in comparison to \cite{kropina}. Note that changing $|u(x)|_h$ in the generalized Randers case does not entail the modification of navigation data $(h, W_R)$. Now, with the presence of a strong perturbation, one who sets $|u(x)|_h$ initially may state that it affects the scalar correspondance as the strict condition $|u(x)|_h=|\tilde{W}(x)|_h$ 
is in force. Let us also remark that unlike the Randers case where the entire space can be covered with the time-minimal paths, now not all the destinations are available any more due to the fact that the wind became stronger. In further research one might obtain the general equations or the conditions to be fulfilled for the limit curves which determine the subspace of $M$ including the flows of Kropina geodesics for given navigation data $(h, |u|_h, \tilde{W})$. Therefore, the maps on $M$ including the areas of existing connections in the presence of a strong wind could complement the findings. 

The solutions to the Zermelo problem are represented in the original and the generalized formulation by the same paths up to scaling if $|u|_h=const.$, that is $F$- and $\tilde{F}$-geodesics trace the same curves. Such a case corresponds to a pair of conformal homothetic Finsler metrics, that is a special case of weakly conformally equivalent Finsler metrics considered in \cite{rafie}. The travel times then differ due to the influence of variable $|u(x)|_h$ or, equivalently, $|W(x)|_h$. By Lemma \ref{lemat_kropina} the consequence is the fact that applying any $|u|_h\neq 1$ the passage time will increase in comparison to the original expression which determines the solution of absolutely minimal time. Furthemore, the bijection is established between Kropina spaces represented by pairs  $(\tilde{\alpha}, \tilde{\beta})$ and $(h, W)$ or triples $(h, |u|_h,\tilde{W})$, where $\tilde{W}^i=|u|_hW^i$. Therefore, the generalization with a spatial function $|u(x)|_h$ in the presence of a strong wind corresponds to the original problem with normalized wind, i.e. $W=
\frac{\tilde{W}}{|\tilde{W}|_h}$. This conclusion is in line with the theory on globally defined $U$-Kropina metrics \cite{kropina} where it follows that any Riemannian manifold $(M, h)$ that admits a globally defined nowhere vanishing vector field $W$ can be endowed with a globally defined $U$-Kropina metric. In order to see this, it is remarked that for a Riemannian metric $h$ and a vector field $W$ on $M$ without zeros, it is enough to normalize $\tilde{W}$. 
Then one can construct a $U$-Kropina metric using $h$ and $W$. In fact, the correlated studies coming from the slightly different starting points of view at the navigation problem 
meet. This is caused directly by the main assumption on the norms' equality 
which determines a very special case of the Zermelo navigation problem treated in the paper. 

  




\bigskip

\noindent
\textbf{Acknowledgement} \quad The research was supported by a grant from the Polish National Science Center under research project number 2013/09/N/ST10/02537.  


\bibliographystyle{plain}
\bibliography{pk}

\end{document}